  \tikzset{trees/.style={
	inner sep=0, 
	minimum width=0, 
	minimum height=0,
	level distance=.5cm, 
	sibling distance=.5cm,
	edge from parent/.style={shorten <= -2pt, draw, ->},
	grow'=up,
	decoration={markings, mark=at position 0.75 with \arrow{stealth}}
	}
}
    \tikzset{
     oriented WD/.style={
        every to/.style={out=0,in=180,draw},
        label/.style={
           font=\everymath\expandafter{\the\everymath\scriptstyle},
           inner sep=0pt,
           node distance=2pt and -2pt},
        semithick,
        node distance=1 and 1,
        decoration={markings, mark=at position \stringdecpos with \stringdec},
        ar/.style={postaction={decorate}},
        execute at begin picture={\tikzset{
           x=\bbx, y=\bby,
           }}
        },
     string decoration/.store in=\stringdec,
     string decoration={\arrow{stealth};},
     string decoration pos/.store in=\stringdecpos,
     string decoration pos=.7,
	 	 dot size/.store in=\dotsize,
	   dot size=3pt,
	 	 dot/.style={
			 circle, draw, thick, inner sep=0, fill=black, minimum width=\dotsize
	   },
     bbx/.store in=\bbx,
     bbx = 1.5cm,
     bby/.store in=\bby,
     bby = 1.5ex,
     bb port sep/.store in=\bbportsep,
     bb port sep=1.5,
     bb port length/.store in=\bbportlen,
     bb port length=4pt,
     bb penetrate/.store in=\bbpenetrate,
     bb penetrate=0,
     bb min width/.store in=\bbminwidth,
     bb min width=1cm,
     bb rounded corners/.store in=\bbcorners,
     bb rounded corners=2pt,
     bb small/.style={bb port sep=1, bb port length=2.5pt, bbx=.4cm, bb min width=.4cm, 
bby=.7ex},
		 bb medium/.style={bb port sep=1, bb port length=2.5pt, bbx=.4cm, bb min width=.4cm, 
bby=.9ex},
     bb/.code 2 args={
        \pgfmathsetlengthmacro{\bbheight}{\bbportsep * (max(#1,#2)+1) * \bby}
        \pgfkeysalso{draw,minimum height=\bbheight,minimum width=\bbminwidth,outer 
sep=0pt,
           rounded corners=\bbcorners,thick,
           prefix after command={\pgfextra{\let\fixname\tikzlastnode}},
           append after command={\pgfextra{\draw
              \ifnum #1=0{} \else foreach \i in {1,...,#1} {
                 ($(\fixname.north west)!{\i/(#1+1)}!(\fixname.south west)$) +(-
\bbportlen,0) 
  coordinate (\fixname_in\i) -- +(\bbpenetrate,0) coordinate (\fixname_in\i')}\fi 
              \ifnum #2=0{} \else foreach \i in {1,...,#2} {
                 ($(\fixname.north east)!{\i/(#2+1)}!(\fixname.south east)$) +(-
\bbpenetrate,0) 
  coordinate (\fixname_out\i') -- +(\bbportlen,0) coordinate (\fixname_out\i)}\fi;
           }}}
     },
     bb name/.style={append after command={\pgfextra{\node[anchor=north] at 
(\fixname.north) {#1};}}}
  }
 \theoremstyle{plain}
\newtheorem{theorem}{Theorem}[section] 
\newtheorem{proposition}[theorem]{Proposition}
\theoremstyle{definition}
\newtheorem{definition}[theorem]{Definition}
\newtheorem*{axiom*}{Axiom}
\theoremstyle{remark}
\newtheorem{example}[theorem]{Example}
\newtheorem{warning}[theorem]{Warning}
\DeclareSymbolFont{stmry}{U}{stmry}{m}{n}
\DeclareMathSymbol\fatsemi\mathop{stmry}{"23}
\DeclareFontFamily{U}{mathx}{\hyphenchar\font45}
\DeclareFontShape{U}{mathx}{m}{n}{
      <5> <6> <7> <8> <9> <10>
      <10.95> <12> <14.4> <17.28> <20.74> <24.88>
      mathx10
      }{}
\DeclareSymbolFont{mathx}{U}{mathx}{m}{n}
\DeclareMathAccent{\widecheck}{0}{mathx}{"71}
\renewcommand{\ss}{\subseteq}
\DeclareMathOperator{\cod}{cod}
\DeclareMathOperator{\Ob}{Ob}
\newcommand{\const}[1]{\texttt{#1}}
\newcommand{\Set}[1]{\mathsf{#1}}
\newcommand{\ord}[1]{\mathsf{#1}}
\newcommand{\cat}[1]{\mathcal{#1}}
\newcommand{\Cat}[1]{\mathbf{#1}}
\newcommand{\Fun}[1]{\mathrm{#1}}
\newcommand{\id}{\mathrm{id}}
\newcommand{\then}{\mathbin{\fatsemi}}
\newcommand{\To}[1]{\xrightarrow{#1}}
\newcommand{\surj}{\twoheadrightarrow}
\newcommand{\tn}[1]{\textnormal{#1}}
\newcommand{\nn}{\mathbb{N}}
\newcommand{\rr}{\mathbb{R}}
\newcommand{\smset}{\Cat{Set}}
\newcommand{\smcat}{\Cat{Cat}}
\newcommand{\set}{\tn{-}\Cat{Set}}
\definecolor{dgreen}{rgb}{0.0, 0.5, 0.3} 
\definecolor{dyellow}{rgb}{8.0, 0.74, 0}
\newcommand{\LTO}[1]{\overset{\text{#1}}{\bullet}}
\newcommand{\bul}[1][black]{{\color{#1}\ensuremath{\bullet}}}
\newcommand{\yon}{\mathcal{y}}
\newcommand{\poly}[1][]{#1\Cat{Poly}}
\newcommand{\0}{\textsf{0}}
\newcommand{\1}{\tn{\textsf{1}}}
\newcommand{\2}{\tn{\textsf{2}}}
\newcommand{\tri}{\mathbin{\triangleleft}}
\newcommand{\tripow}[1]{^{\lhd #1}}
\newcommand{\qqand}{\qquad\text{and}\qquad}
\newcommand{\cofree}[1]{\cat{C}_{#1}}
\newcommand{\para}{\Cat{Para}}
\newcommand{\oorg}{\mathbb{O}\Cat{rg}}
\newcommand{\learn}{\Cat{Learn}}
\newcommand{\llearn}{\mathbb{L}\Cat{earn}}
\newcommand{\euc}{\Cat{Euc}}
\newcommand{\slens}{\Cat{SLens}}
\newcommand{\coalg}{\tn{-}\Cat{Coalg}}
\newcommand{\tree}{\Set{Tree}}
\newcommand{\Tree}{\Cat{Tree}}
\begin{document}

\title{Learners' Languages}
\def\titlerunning{Learners' Languages}
\author{David I. Spivak \institute{Topos Institute\\Berkeley USA}\email{david@topos.institute}}
\def\authorrunning{David I. Spivak}

\date{\vspace{-.2in}}

\maketitle

\begin{abstract}
In ``Backprop as functor'', the authors show that the fundamental elements of deep learning---gradient descent and backpropagation---can be conceptualized as a strong monoidal functor $\para(\euc)\to\learn$ from the category of parameterized Euclidean spaces to that of learners, a category developed explicitly to capture parameter update and backpropagation. It was soon realized that there is an isomorphism $\learn\cong\para(\slens)$, where $\slens$ is the symmetric monoidal category of simple lenses as used in functional programming.

In this note, we observe that $\slens$ is a full subcategory of $\poly$, the category of polynomial functors in one variable, via the functor $A\mapsto A\yon^A$. Using the fact that $(\poly,\otimes)$ is monoidal closed, we show that a map $A\to B$ in $\para(\slens)$ has a natural interpretation in terms of dynamical systems (more precisely, generalized Moore machines) whose interface is the internal-hom type $[A\yon^A,B\yon^B]$.

Finally, we review the fact that the category $p\coalg$ of dynamical systems on any $p\in\poly$ forms a topos, and consider the logical propositions that can be stated in its internal language. We give gradient descent as an example, and we conclude by discussing some directions for future work.
\end{abstract}

\section{Introduction}

In the paper ``Backprop as functor'' \cite{fong2019backprop}, the authors show that gradient descent and backpropagation---as used in deep learning---can be conceptualized as a strong monoidal functor $L\colon\para(\euc)\to\learn$ from the category of parameterized euclidean spaces to that of learners, a category developed explicitly to capture parameter update and backpropagation. Here, $\para$ is a monad on the category of symmetric monoidal categories. It sends $(\cat{C},I,\otimes)$ to a category with the same objects $\Ob\para(\cat{C})\coloneqq\Ob\cat{C}$, but with hom-sets that include a parameterizing object
\[
\para(\cat{C})(c_1,c_2)\coloneqq\{(p,f)\mid p\in\cat{C}, f\colon c_1\otimes p\to c_2\}/\sim
\]
where the parameterizing object $p$ is considered up to an equivalence relation $\sim$.%
\footnote{The equivalence relation $\sim$ is generated by regarding $(p,f)\sim (p',f')$ if there exists an epimorphism $g\colon p\surj p'$ with $f=g\then f'$. As discussed in Gavranovi\'{c}'s thesis \cite{gavranovic2019compositional}, it is often preferable to dispense with the equivalence relation and instead conceive of $\para(\cat{C})$ as a bicategory. This is what we shall do as well, though we use different 2-morphisms; see \cref{warning.maps}.}
The composite of $c_1\otimes p_1\to c_2$ and $c_2\otimes p_2\to c_3$ in $\para(\cat{C})$ has parameterizing object $p_1\otimes p_2$ and is given by the ordinary composite
\[
c_1\otimes(p_1\otimes p_2) \to c_2\otimes p_2\to c_3.
\]
The domain of the backpropagation functor, $\para(\euc)$, is thus the Para construction applied to the Cartesian monoidal category of Euclidean spaces $\rr^n$ and smooth maps.

But it was soon realized that $\learn$ is in fact also given by a Para construction, namely there is an isomorphism $\learn\cong\para(\slens)$, where $\slens$ is the symmetric monoidal category of simple lenses as used in functional programming. The objects of $\slens$ are sets $\Ob(\slens)=\Ob(\smset)$, but a morphism consists of a pair of functions 
\begin{equation}\label{eqn.slens}
	\slens(A,B)\coloneq\{(f_1,f^\sharp)\mid f_1\colon A\to B,\quad f^\sharp\colon A\times B\to A\}.
\end{equation}
Thus a map $A\to B$ in $\para(\slens)$ consists of a set $P$ and functions $f_1\colon A\times P\to B$ and $f^\sharp\colon A\times B\times P\to A\times P$. The authors of \cite{fong2019backprop} developed this structure in order to conceptualize the compositional nature of deep learning as comprising a parameterizing set $P$ (often called ``the space of weights and biases'') and three functions:
\begin{align}\nonumber
	I&\colon A\times P\to B&\text{implement}\\\nonumber
	U&\colon A\times B\times P\to P&\text{update}\\\label{eqn.learn}
	R&\colon A\times B\times P\to A&\text{request}
\end{align}
The implement function is a $P$-parameterized function $A\to B$, and the update and request functions take a pair $(a,b)$ of ``training data'' and both updates the parameter---e.g.\ by gradient descent---and returns an element of the input space $A$, which is used to train another such function in the network. This last step---the request---is not just found in deep learning as practiced, but is in fact crucial for defining composition.%
\footnote{The reader can check that given only $(P,I,U)\colon A\to B$ and $(Q,J,V)\colon B\to C$, one \emph{can} construct a composed parameter set $P\times Q$, and one \emph{can} construct a composed implement function $A\times P\times Q\to C$, but one \emph{cannot} construct an associative update operation $A\times C\times P\times Q\to P\times Q$. In order to get it, one needs the request function $B\times C\times Q\to B$. By endowing morphisms with the request function, as in $\learn$ \eqref{eqn.learn}, composition and a monoidal structure is easily defined.}

But by this point, the notation $(P,I,U,R)$ of $\learn$ has become heavy and the structure seems to be getting lost. Even knowing $\learn\cong\para(\slens)$ seems ad-hoc since the morphisms \eqref{eqn.slens} of $\slens$ are---to this point---mathematically unmotivated. This is where $\poly$ comes in. 

In this note, we observe that $\slens$ is a full subcategory of $\poly$, the category of polynomial functors in one variable, via the functor $A\mapsto A\yon^A$. Using the fact that $(\poly,\otimes)$ is monoidal closed, we will reconceptualize $\para(\slens)$ in terms of polynomial coalgebras, which can be understood as dynamical systems: machines with states that can be observed as ``output'' and updated based on ``input'' \cite{jacobs2017introduction}. In particular, a morphism $A\to B$ in $\learn$ will be recast as a coalgebra on the internal hom polynomial $[A\yon^A,B\yon^B]$, and we will explain this in terms of dynamics. 

This viewpoint allows us to substantially generalize the construction in $\learn$, a construction which also appears prominently in the theory of open games \cite{ghani2016compositional}. Perhaps more interestingly, it allows us to use the fact that the category $p\coalg$ of dynamical systems on any interface $p\in\poly$ forms a \emph{topos}. A topos is a setting in which one can do dependent type theory and higher-order logic. In fact, the topos of $p$-coalgebras is in some ways as simple as possible: it is not only a copresheaf topos $p\coalg\cong\smset^\cat{C}$ for a certain category $\cat{C}$, but in fact the site $\cat{C}$ is the free category on a directed graph that we'll call $\tree_p$. This makes the logic of $p$-coalgebras---and hence of dynamical systems, learners, and game-players---quite simple. However, the particular graph $\tree_p$ associated to $p$ is highly-structured, and we should find that this structure is inherited by the internal language of $p\coalg$. 

The point is to consider logical propositions that can be stated in the internal language of $p\coalg$ and to use these propositions in order to constrain the behavior of learners and game-players (categorified as discussed above), and of interaction patterns between dynamical systems more generally. For example, ``gradient descent and backpropagation'' is a property we can express in the internal language. Note that the term \emph{language} in the title refers to the internal language of the topos $p\coalg$, which can be thought of as a language for specifying or constraining learning algorithms or dynamic organizational patterns more generally.

\subsection*{Plan for the paper}

In \cref{chap.poly} we will discuss various relevant constructions in the category $\poly$ of polynomial functors in one variable. In particular, we will review its symmetric monoidal closed structure $(\poly,\yon,\otimes, [-,-])$, its composition monoidal structure $(\yon,\tri)$, and the notion of coalgebras. We explain how morphisms in $\learn$ can be phrased in terms of coalgebras on internal hom objects, and we reconstruct $\para(\slens)$ in these terms. 

In \cref{chap.topos_learn}, we first show that the category $p\coalg$ of $p$-coalgebras for the endofunctor $p\colon\smset\to\smset$ is a presheaf topos. We then discuss the internal logic of $p\coalg$, and we conclude by giving several directions for future work.

\subsection*{Notation}

We denote the category of sets by $\smset$; we generally denote sets with upper-case letters $A,B$, etc. Given a natural number $N\in\nn$, we write $\ord{N}\coloneqq\{1,\ldots,N\}$, so $\0=\varnothing$, $\1=\{1\}$, $\2=\{1,2\}$, etc. Given sets $A,B$, we often write $AB\coloneqq A\times B$ to denote their Cartesian product. We will denote polynomials with lower-case letters, $p,q$, etc.

\subsection*{Acknowledgments}
We thank David A.\ Dalrymple, Dai Girardo, Paul Kreiner, David Jaz Myers, and Alex Zhu for useful conversations. We also acknowledge support from AFOSR grant FA9550-20-10348.\goodbreak

\section{Constructions in $\poly$}\label{chap.poly}

In this section we review the category $\poly$, for which \cite{kock2012polynomial} is an excellent reference; we also discuss its symmetric monoidal closed structure. Then we discuss polynomial coalgebras and reconceptualize the category $\learn$ and Gavranovi\'{c}'s bicategorical variant, in that language. 
\subsection{Background on $\poly$ as a monoidal closed category}

For any set $A$, let $\yon^A\colon\smset\to\smset$ be the functor \emph{represented} by $A$; that is, $\yon^A$ applied to a set $S$ is $\smset(A,S)\cong S^A$. In particular, $\yon\coloneqq\yon^\1$ is (isomorphic to) the identity functor $S\mapsto S$ and $\1\coloneqq\yon^\0$ is the constant functor $S\mapsto\1$. Note that $\yon^A(\1)\cong\1^A\cong\1$ for any $A$.

The coproduct of functors $F$ and $G$, denoted $F+G$, is taken pointwise; this means there is a natural isomorphism
\[(F+G)(S)\cong F(S)+G(S)\]
where the coproduct $F(S)+G(S)$ is taken in $\smset$. Similarly, for any set $I$ and functors $F_i$, one for each $i\in I$, their coproduct is computed pointwise
\[
	\left(\sum_{i\in I}F_i\right)(S)\cong\sum_{i\in I}F_i(S).
\]

\begin{definition}
A \emph{polynomial functor} $p$ is any coproduct
\[
	p\coloneqq\sum_{i\in I}\yon^{p[i]}
\]
 of representable functors, where $I\in\smset$ and each $p[i]\in\smset$ are sets. We denote the category of polynomial functors and natural transformations between them by $\poly$.
\end{definition}

We note that if $p=\sum_{i\in I}\yon^{p[i]}$ then $p(\1)\cong I$; hence we can write any $p\in\poly$ in canonical form
\begin{equation}\label{eqn.poly}
	p\cong\sum_{i\in p(\1)}\yon^{p[i]}.
\end{equation}
We refer to each $i\in p(\1)$ as a \emph{position} in $p$ and to each $d\in p[i]$ as a \emph{direction} at $i$. 

\begin{example}
We can consider any set $S$ as a \emph{constant} polynomial $\sum_{s\in S}\yon^\0$.
\end{example}

We can consider a polynomial $p\in\poly$ as a set (or discrete category) $p(\1)$ equipped with a functor $p[-]\colon p(\1)\to\smset$. Then a map of polynomials $\varphi\colon p\to q$ can be identified with a diagram as follows
\begin{equation}\label{eqn.poly_typed}
\begin{tikzcd}
	p(\1)\ar[rr, "\varphi_1"]\ar[rd, bend right, pos=.3, "{p[-]}"']&
	\ar[d, phantom, "\underset{\varphi^\sharp}{\Leftarrow}"]&
	q(\1)\ar[dl, bend left, pos=.3, "{q[-]}"]\\&
	\smset
\end{tikzcd}
\end{equation}
That is, $\varphi$ can be decomposed into a function $\varphi_1\colon p(\1)\to q(\1)$ on positions, and for every $i\in p(\1)$ with $j\coloneqq\varphi_1(i)$, a component function $\varphi^\sharp_i\colon q[j]\to p[i]$ on directions. This follows from the Yoneda lemma and the universal property of coproducts. We will sometimes use this $(\varphi_1,\varphi^\sharp)\colon p\to q$ notation below.

\begin{example}
A morphism $A_1\yon^{A_2}\to B_1\yon^{B_2}$ can be identified with a function $\varphi_1\colon A_1\to B_1$ and a function $\varphi^\sharp\colon A_1\times B_2\to A_2$. That is,
\begin{equation}\label{eqn.lens}
	\poly(A_1\yon^{A_2},\, B_1\yon^{B_2})\cong B_1^{A_1}A_2^{A_1B_2}.
\end{equation}
\end{example}

\begin{proposition}\label{prop.composite}
The composite of polynomial functors $p,q\in\poly$, which we denote $p\tri q$, is again polynomial with formula
\begin{equation}\label{eqn.tri_formula}
p\tri q\cong\sum_{i\in p(\1)}\;\sum_{j\colon p[i]\to q(\1)}\yon^{\sum_{d\in p[i]}q[jd]}
\end{equation}
The composition operation $\tri$ is a (nonsymmetric) monoidal structure on $\poly$, with unit $\yon$.
\end{proposition}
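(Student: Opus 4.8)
The plan is to recognize the operation $\tri$ as nothing but composition of endofunctors $\smset\to\smset$, restricted along the inclusion of $\poly$ into the category $\smset^{\smset}$ of endofunctors and natural transformations. Concretely, I would read $p\tri q$ as the functor $S\mapsto p(q(S))$ and first establish the formula \eqref{eqn.tri_formula} by a direct pointwise computation. Once the formula is in hand it exhibits $p\tri q$ as a genuine coproduct of representables, so that $\poly$ is closed under this composition, after which the monoidal axioms come essentially for free from those of functor composition.

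For the formula, writing $p\iso\sum_{i\in p(\1)}\yon^{p[i]}$ and $q\iso\sum_{k\in q(\1)}\yon^{q[k]}$ in canonical form \eqref{eqn.poly}, I evaluate at a set $S$. Since $q(S)\iso\sum_{k\in q(\1)}S^{q[k]}$ and $p(T)\iso\sum_{i\in p(\1)}T^{p[i]}$, we get
\[
p(q(S))\iso\sum_{i\in p(\1)}\Bigl({\textstyle\sum_{k\in q(\1)}S^{q[k]}}\Bigr)^{p[i]}.
\]
The crux is to rewrite the inner expression. A function from the set $p[i]$ into a coproduct $\sum_{k}S^{q[k]}$ is the same datum as a choice, for each $d\in p[i]$, of a summand $j(d)\in q(\1)$ together with an element of $S^{q[jd]}$; that is, the distributive law gives
\[
\Bigl({\textstyle\sum_{k\in q(\1)}S^{q[k]}}\Bigr)^{p[i]}\iso\sum_{j\colon p[i]\to q(\1)}\;\prod_{d\in p[i]}S^{q[jd]}\iso\sum_{j\colon p[i]\to q(\1)}S^{\sum_{d\in p[i]}q[jd]},
\]
where the last step is the exponential law $\prod_d S^{B_d}\iso S^{\sum_d B_d}$. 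Substituting back yields exactly \eqref{eqn.tri_formula}. Each bijection used is natural in $S$, so they assemble into an isomorphism of functors, i.e.\ an isomorphism in $\poly$; in particular the right-hand side is manifestly a coproduct of representables, so $p\tri q$ is again polynomial.

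For the monoidal structure, I would argue formally rather than manipulate the nested sums. Composition of endofunctors makes $(\smset^{\smset},\circ,\id)$ a strict monoidal category. Under the identification $p\tri q=p\circ q$, the unit is the identity functor, and since $\yon\iso\id$ (as noted after the definition of $\poly$), we have $\yon\tri p\iso p\iso p\tri\yon$. Associativity $(p\tri q)\tri r\iso p\tri(q\tri r)$ is the strict associativity of functor composition, and the action of $\tri$ on morphisms is horizontal composition (whiskering) of natural transformations, which is automatically functorial in both arguments. Because $\poly$ is a \emph{full} subcategory of $\smset^{\smset}$ closed under $\circ$ (by the formula) and containing an object isomorphic to the unit, the associator and unitor of functor composition restrict to $\poly$ and the pentagon and triangle identities are inherited; hence $(\poly,\yon,\tri)$ is monoidal. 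Noncommutativity is clear from examples, e.g.\ comparing $\yon^{A}\tri C$ with $C\tri\yon^{A}$ for a constant polynomial $C$, which are constant at $C^{A}$ and at $C$ respectively.

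I expect the only real work to be the formula \eqref{eqn.tri_formula}, and within it the single distributivity step turning a power of a coproduct into a coproduct of powers indexed by functions $j\colon p[i]\to q(\1)$; the bookkeeping of the iterated index $\sum_{d\in p[i]}q[jd]$ and the verification that every isomorphism is natural in $S$ are where care is needed. By contrast, the monoidal axioms are not a genuine obstacle, since they are inherited wholesale from the strict monoidal structure of functor composition once closure under $\circ$ has been established.
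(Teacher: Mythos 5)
Your proof is correct, and it arrives at the formula \eqref{eqn.tri_formula} through the same essential mechanism as the paper---the distributive law converting a power of a coproduct into a coproduct indexed by functions $j\colon p[i]\to q(\1)$, followed by the exponential law---but the two arguments are packaged differently. The paper never evaluates at a set: it rewrites $\yon^{A}\cong\prod_{a\in A}\yon$ and manipulates formal expressions in $\yon$, carrying out substitution, distributivity, and the exponent rules entirely at the level of functors, so that every isomorphism is constructed in $\poly$ directly and no separate naturality check is needed. You instead compute $p(q(S))$ pointwise and then observe that each bijection is natural in $S$; this is more elementary and makes the combinatorics of the index $\sum_{d\in p[i]}q[jd]$ transparent, at the cost of exactly the naturality bookkeeping you flag as the delicate point. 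On the monoidal axioms you are in fact more careful than the paper, which dispatches them with the remark that composition of functors is well-known to be a monoidal operation: your observation that $\yon$ is only \emph{isomorphic} to $\id_{\smset}$, so that the strict monoidal structure of $(\smset^{\smset},\circ,\id)$ must be transported to a non-strict structure on the full subcategory $\poly$ once closure under $\circ$ is established, spells out precisely what the paper leaves implicit; and your constant-polynomial example witnessing noncommutativity plays the same role as the paper's example $p=\yon^{\2}$, $q=\yon+\1$.
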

\begin{proof}
See page~\pageref{proof.prop.composite}.
\end{proof}

\begin{example}
If $p=\yon^\2$ and $q=\yon+\1$, then $p\tri q\cong\yon^\2+\2\yon+\1$ whereas $q\tri p\cong \yon^\2+\1$.
\end{example}

\begin{example}
Applying a polynomial $p$ to a set $S$ is given by composition: $p(S)\cong p\tri S$.
\end{example}

\begin{proposition}[The symmetric monoidal category $(\poly,\yon,\otimes)$]\label{prop.otimes}
The category $\poly$ has a symmetric monoidal structure with unit $\yon$ and monoidal product $\otimes$ on objects given by the following formula
\[
	p\otimes q\coloneqq\sum_{i\in p(\1)}\sum_{j\in q(\1)}\yon^{p[i]\times q[j]}
\]
\end{proposition}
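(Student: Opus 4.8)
The plan is to obtain the entire symmetric monoidal structure on $(\poly,\yon,\otimes)$ by inheritance from the Cartesian structure $(\smset,\1,\times)$, transported along the assignment $A\mapsto\yon^A$ and extended to all polynomials by letting $\otimes$ distribute over the canonical coproduct decomposition \eqref{eqn.poly}. First I would record the two basic identities underlying the definition: on representables the formula reads $\yon^A\otimes\yon^B\cong\yon^{A\times B}$, and the proposed unit is $\yon=\yon^\1$. Writing $p\cong\sum_{i\in p(\1)}\yon^{p[i]}$ and $q\cong\sum_{j\in q(\1)}\yon^{q[j]}$ as in \eqref{eqn.poly}, the stated formula is exactly what one gets by distributing $\otimes$ over these coproducts:
\[
p\otimes q\cong\Big(\sum_{i}\yon^{p[i]}\Big)\otimes\Big(\sum_{j}\yon^{q[j]}\Big)\cong\sum_{i,j}\yon^{p[i]}\otimes\yon^{q[j]}\cong\sum_{i,j}\yon^{p[i]\times q[j]}.
\]
This identifies $\yon^{(-)}\colon\smset\op\to\poly$ as a strong monoidal functor sending $\times$ to $\otimes$ and $\1$ to $\yon$, and it is the engine behind every subsequent verification.

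Next I would promote $\otimes$ to a bifunctor using the typed description \eqref{eqn.poly_typed}. Given $\varphi\colon p\to p'$ and $\psi\colon q\to q'$, I define $\varphi\otimes\psi$ on positions by $\varphi_1\times\psi_1\colon p(\1)\times q(\1)\to p'(\1)\times q'(\1)$, and on directions at a position $(i,j)$ by the product $\varphi^\sharp_i\times\psi^\sharp_j$ of the two backward direction-components. The associator, unitors, and braiding are then all defined position-by-position and direction-by-direction from the corresponding data of $(\smset,\1,\times)$: on directions the associator is the associativity isomorphism $(p[i]\times q[j])\times r[k]\xrightarrow{\sim}p[i]\times(q[j]\times r[k])$, the left unitor uses $\1\times p[j]\cong p[j]$, and the braiding uses the swap $p[i]\times q[j]\cong q[j]\times p[i]$. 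Naturality of each structure morphism reduces to naturality of the corresponding isomorphism in $\smset$, and the coherence axioms (pentagon, triangle, and the two hexagons for the symmetry) reduce index-wise to the very same axioms for the Cartesian product on $\smset$, which hold.

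The step requiring the most care is bifunctoriality and naturality in the direction variable, since a morphism of polynomials carries a contravariant component $\varphi^\sharp$ that is precomposed (pulled back) under composition in $\poly$; I must check that forming products of these backward maps is compatible with that composition rule, and likewise that the braiding is natural despite acting on both the covariant positions and the contravariant directions. This is a routine but slightly fiddly diagram chase. Alternatively, one can sidestep it entirely by observing that $\poly$ is the free coproduct completion $\mathrm{Fam}(\smset\op)$ of $\smset\op$, whose symmetric monoidal structure is canonically induced by that of $(\smset\op,\times,\1)$ through its universal property; the induced tensor is precisely the double-sum formula above, so the coherence is then inherited for free.
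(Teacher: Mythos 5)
Your proof is correct, and its main route is a fully fleshed-out version of what the paper leaves implicit: the paper's entire proof is the assertion that associativity and unitality ``are clear'' from the formula and that functoriality ``can also be checked,'' whereas you actually carry out these checks, defining $\varphi\otimes\psi$ on positions by $\varphi_1\times\psi_1$ and on directions by $\varphi^\sharp_i\times\psi^\sharp_j$, and transporting the associator, unitors, and braiding index-wise from $(\smset,\1,\times)$ so that the pentagon, triangle, and hexagon axioms reduce to their Cartesian counterparts. You also correctly isolate the one place where the direct verification has genuine content---compatibility of products of the contravariant components $\varphi^\sharp$ with composition in $\poly$, and naturality of the braiding, which acts covariantly on positions but contravariantly on directions---a point the paper does not mention at all. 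Your closing alternative is genuinely different from anything in the paper: identifying $\poly$ with the free coproduct completion $\mathrm{Fam}(\smset\op)$ of the symmetric monoidal category $(\smset\op,\times,\1)$, so that the symmetric monoidal structure lifts canonically, with tensor $(I,X)\otimes(J,Y)=\bigl(I\times J,\,(i,j)\mapsto X_i\times Y_j\bigr)$, which is exactly the double-sum formula of the statement. The direct route buys self-containedness at the cost of the fiddly chase you flag; the $\mathrm{Fam}$ route buys all coherence for free from a universal property and conceptually explains both the morphism description \eqref{eqn.poly_typed} and why $A\mapsto\yon^A$ is strong monoidal, at the cost of citing or proving that lifting result. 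Either route is complete, and both are strictly more detailed than the paper's own sketch.
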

\begin{proof}
See page~\pageref{proof.prop.otimes}.
\end{proof}

\begin{proposition}[Internal hom {$[-,-]$}]\label{prop.internal_hom}
The $\otimes$ monoidal structure on $\poly$ is closed; that is, for every $p,q\in\poly$ there is a polynomial
\begin{equation}\label{eqn.internal_hom}
[p,q]\coloneqq\sum_{\varphi\colon p\to q}\yon^{\sum_{i\in p(\1)}q[\varphi_1(i)]}
\end{equation}
for which we have a natural isomorphism
\begin{equation}\label{eqn.curry_adjunction}
\poly(r\otimes p,q)\cong\poly(r, [p,q]).
\end{equation}
\end{proposition}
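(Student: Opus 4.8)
The plan is to establish the natural isomorphism \eqref{eqn.curry_adjunction} by direct computation, first reducing to the case where $r$ is representable and then unwinding both sides into products and coproducts of sets. The tools are the Yoneda lemma in the form $\poly(\yon^R,s)\cong s(R)$, the fact that mapping out of a coproduct is a product, and the distributive law together with the exponential laws in $\smset$. To reduce to the representable case, write $r\cong\sum_{k\in r(\1)}\yon^{r[k]}$ as in \eqref{eqn.poly}. The formula of \cref{prop.otimes} gives $r\otimes p\cong\sum_{k\in r(\1)}\bigl(\yon^{r[k]}\otimes p\bigr)$ directly, and since $\poly(-,q)$ and $\poly(-,[p,q])$ both send this coproduct in $r$ to a product over $k\in r(\1)$, both sides of \eqref{eqn.curry_adjunction} become $\prod_{k\in r(\1)}$ of the corresponding expression with $r$ replaced by $\yon^{r[k]}$. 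Hence it suffices to produce, for each set $R$, a natural isomorphism $\poly(\yon^R\otimes p,q)\cong\poly(\yon^R,[p,q])$.

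Next I would compute each side for $r=\yon^R$. By \cref{prop.otimes} we have $\yon^R\otimes p\cong\sum_{i\in p(\1)}\yon^{R\times p[i]}$, so mapping out of this coproduct and applying Yoneda gives $\poly(\yon^R\otimes p,q)\cong\prod_{i\in p(\1)}q(R\times p[i])$. On the other side, Yoneda gives $\poly(\yon^R,[p,q])\cong[p,q](R)$, and substituting $R$ into the defining formula \eqref{eqn.internal_hom} (using $R^{\sum_i E_i}\cong\prod_i R^{E_i}$) yields $[p,q](R)\cong\sum_{\varphi\colon p\to q}\prod_{i\in p(\1)}R^{q[\varphi_1(i)]}$. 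Thus the entire proposition reduces to the single bijection
\[
\prod_{i\in p(\1)}q(R\times p[i])\;\cong\;\sum_{\varphi\colon p\to q}\;\prod_{i\in p(\1)}R^{q[\varphi_1(i)]}.
\]

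To establish this bijection I would expand the left factor via $q(T)\cong\sum_{j\in q(\1)}T^{q[j]}$ with $T=R\times p[i]$, together with the exponential law $(R\times p[i])^{q[j]}\cong R^{q[j]}\times p[i]^{q[j]}$, so that $q(R\times p[i])\cong\sum_{j\in q(\1)}R^{q[j]}\times p[i]^{q[j]}$. The crucial move is then the distributive law: a product over $i\in p(\1)$ of coproducts over $j\in q(\1)$ is isomorphic to a coproduct over choice functions $\varphi_1\colon p(\1)\to q(\1)$ of the corresponding products,
\[
\prod_{i}\sum_{j}\Bigl(R^{q[j]}\times p[i]^{q[j]}\Bigr)\;\cong\;\sum_{\varphi_1\colon p(\1)\to q(\1)}\;\prod_{i}\Bigl(R^{q[\varphi_1(i)]}\times p[i]^{q[\varphi_1(i)]}\Bigr).
\]
Splitting each inner product into its $R$-factor and its $p$-factor and recognizing $\prod_i p[i]^{q[\varphi_1(i)]}=\prod_i\smset(q[\varphi_1(i)],p[i])$ as exactly the set of direction-components $\varphi^\sharp$ which, paired with $\varphi_1$, constitute a morphism $\varphi\colon p\to q$ in the sense of \eqref{eqn.poly_typed}, collapses the sum over $\varphi_1$ and the $p$-factor into a single sum over $\varphi\colon p\to q$, leaving $\sum_{\varphi}\prod_i R^{q[\varphi_1(i)]}$ as required.

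The step I expect to be the main obstacle is not any single isomorphism above—each is a routine consequence of the distributive and exponential laws in $\smset$—but rather verifying that the resulting composite bijection is \emph{natural} in $r$ (and in $q$), which is what upgrades a family of bijections to a genuine internal-hom adjunction. I would dispatch this by observing that every identification used (Yoneda, the coproduct/product and distributive laws, and the exponential laws) is natural in its arguments, so that their composite is too; concretely one tracks a morphism $r\otimes p\to q$ through the chain of identifications and checks that the induced map $r\to[p,q]$ behaves correctly under precomposition along $r'\to r$ and postcomposition along $q\to q'$.
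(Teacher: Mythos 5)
Your proposal is correct and takes essentially the same route as the paper's proof: the paper carries an outer product $\prod_{k\in r(\1)}$ with factor $r[k]$ through its entire chain of isomorphisms, which is precisely your reduction to the representable case $r=\yon^{R}$ with $R=r[k]$, and the core moves are identical---distributivity converting $\prod_{i}\sum_{j}$ into a sum over $\varphi_1\colon p(\1)\to q(\1)$, then splitting off $\prod_{i}p[i]^{q[\varphi_1(i)]}$ as the data of $\varphi^\sharp$ so that the sum collapses to one over morphisms $\varphi\colon p\to q$. Your closing attention to naturality is in fact slightly more careful than the paper, which simply presents the chain as ``rearranging terms.''
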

\begin{proof}
See page~\pageref{proof.prop.internal_hom}.
\end{proof}

\begin{example}\label{ex.slens_ihom}
Given sets $A$ and $B$, we use \cref{eqn.lens,eqn.internal_hom} to compute that the internal hom between $A\yon^A$ and $B\yon^B$ is
\[
	[A\yon^A,B\yon^B]\cong B^A A^{AB}\yon^{AB}.
\]
\end{example}

The counit of the adjunction \eqref{eqn.curry_adjunction} is a natural map $\Fun{eval}\colon p\otimes[p,q]\to q$ called \emph{evaluation}. In very much the same way, it induces two sorts of morphisms we will use later:
\begin{equation}\label{eqn.ihom_two_sorts}
	[p_1,q_1]\otimes [p_2,q_2]\to[p_1\otimes p_2,q_1\otimes q_2]
	\qqand
	[p,q]\otimes [q,r]\to[p,r].
\end{equation}

\subsection{Coalgebras, generalized Moore machines, and learners}

Coalgebras for endofunctors $F\colon\smset\to\smset$ form a major topic of study \cite{arbib1982parametrized,adamek2005introduction,jacobs2017introduction}. In this section we recall the definition and explain the relevance to dynamical systems (generalized Moore machines) and learners.

\begin{definition}[Coalgebra]
Given a polynomial $p$, a \emph{$p$-coalgebra} is a pair $(S,\beta)$ where $S\in\smset$ and $\beta\colon S\to p\tri S$. A \emph{$p$-coalgebra morphism} from $(S,\beta)$ to $(S',\beta')$ consists of a function $f\colon S\to S'$ such that the following diagram commutes:
\begin{equation}\label{eqn.coalg_map}
\begin{tikzcd}
	S\ar[r, "\beta"]\ar[d, "f"']&
	p\tri S\ar[d, "p\tri f"]\\
	S'\ar[r, "\beta'"']&
	p\tri S'
\end{tikzcd}
\end{equation}
We denote the category of $p$-coalgebras and their morphisms by $p\coalg$.
\end{definition}

\begin{proposition}\label{prop.sys}
A $p$-coalgebra $(S,\beta)$ can be identified with a map of polynomials
\begin{equation}\label{eqn.misleading}
S\yon^S\to p.
\end{equation}
\end{proposition}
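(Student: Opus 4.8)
The plan is to compute the hom-set $\poly(S\yon^S,p)$ directly and to exhibit a canonical bijection between it and the set $\smset(S,p\tri S)$ of $p$-coalgebra structures on the carrier $S$. First I would write the source as a coproduct of representables, $S\yon^S=\sum_{s\in S}\yon^S$, and use the universal property of the coproduct (which in $\poly$ is computed pointwise) to split
\[
\poly\Bigl(\sum_{s\in S}\yon^S,\;p\Bigr)\iso\prod_{s\in S}\poly(\yon^S,p).
\]
The key step is then the Yoneda lemma: since $\yon^S=\smset(S,-)$ and every $p\in\poly$ is in particular a functor $\smset\to\smset$, we have $\poly(\yon^S,p)=\nat\bigl(\smset(S,-),p\bigr)\iso p(S)$. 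Combining these and using $p(S)\iso p\tri S$ (applying a polynomial is composition),
\[
\poly(S\yon^S,p)\iso\prod_{s\in S}p(S)\iso(p\tri S)^S\iso\smset(S,p\tri S),
\]
and the right-hand side is exactly the set of functions $\beta\colon S\to p\tri S$, i.e.\ the $p$-coalgebra structures on $S$.

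To make the identification transparent, and to check that it is the obvious one, I would also unwind it through the decomposition of polynomial maps in \eqref{eqn.poly_typed}. There a map $(\varphi_1,\varphi^\sharp)\colon S\yon^S\to p$ amounts to a function $\varphi_1\colon S\to p(\1)$ on positions together with, for each $s\in S$, a function $\varphi^\sharp_s\colon p[\varphi_1(s)]\to S$ on directions. Writing $p\tri S\iso\sum_{i\in p(\1)}S^{p[i]}$, the corresponding $\beta$ sends $s$ into the summand indexed by $\varphi_1(s)$ via the map $\varphi^\sharp_s$; conversely, any $\beta$ recovers $\varphi_1$ as its choice-of-summand component and each $\varphi^\sharp_s$ as the accompanying function into $S$. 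Thus the two descriptions literally carry the same data.

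The argument is essentially formal, so I do not expect a real obstacle. The only points deserving care are that the coproduct $\sum_{s\in S}\yon^S$ is genuinely the categorical coproduct in $\poly$ (so that the splitting into a product of hom-sets is valid), and that the Yoneda identification $\poly(\yon^S,p)\iso p(S)$ is the natural one, so that it matches the pointwise formula $p(S)\iso\sum_{i\in p(\1)}S^{p[i]}$ used in the explicit unwinding. With these in hand the bijection is immediate and manifestly canonical.
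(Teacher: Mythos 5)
Your proof is correct and is essentially the ``direct calculation'' the paper invokes in a single line: decomposing $S\yon^S\cong\sum_{s\in S}\yon^S$, applying the coproduct universal property and the Yoneda lemma to get $\poly(S\yon^S,p)\cong\prod_{s\in S}p(S)\cong\smset(S,p\tri S)$ is exactly the canonical route, and your explicit unwinding via $(\varphi_1,\varphi^\sharp)$ matches the paper's conventions from \eqref{eqn.poly_typed}. No gaps; your version simply fills in the details the paper elides.
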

\begin{proof}
One finds an isomorphism $\poly(S,p\tri S)\cong\poly(S\yon^S,p)$ by direct calculation.
\end{proof}

\begin{warning}\label{warning.maps}
Looking at \cref{prop.sys}, one might be tempted to think that a map of $p$-coalgebras as in \eqref{eqn.coalg_map} can be identified with a commuting triangle
\[
\begin{tikzcd}
	S\yon^S\ar[r, "?"]\ar[rr, bend right]& 
	S'\yon^{S'}\ar[r]&
	 p
\end{tikzcd}
\]
but this is not the case; for one thing, the marked arrow does not arise from a function $f\colon S\to S'$. The point is, \eqref{eqn.misleading} \emph{can be misleading} when it comes to maps, and hence we will depart from the so-called $\para$ construction for 2-cells. For us, the correct sort of map between $p$-coalgebras is the usual one, as shown in \eqref{eqn.coalg_map}.
\end{warning}

\begin{proposition}\label{prop.coalg_lax}
For any $p,q\in\poly$ there is a functor
\[
	p\coalg\times q\coalg\to (p\otimes q)\coalg
\]
making $\bullet\coalg$ a lax monoidal functor $\poly\to\smcat$.
\end{proposition}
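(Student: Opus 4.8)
The plan is to build the lax monoidal structure on $\bullet\coalg$ directly out of the symmetric monoidal structure $(\poly,\yon,\otimes)$ of \cref{prop.otimes}, transported through the identification of \cref{prop.sys}. The key observation is a canonical isomorphism
\[
(S\yon^S)\otimes(T\yon^T)\cong (S\times T)\yon^{S\times T},
\]
obtained by unwinding the formula for $\otimes$: the left side has positions $S\times T$ and, at each $(s,t)$, directions $S\times T$. Note also that $\1\yon^\1\cong\yon$, so the functor $S\mapsto S\yon^S$ is strong monoidal $(\smset,\1,\times)\to(\poly,\yon,\otimes)$, and the whole construction will be inherited from this.

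First I would define the comparison functor $\mu_{p,q}$ on objects. Given $(S,\beta)\in p\coalg$ and $(T,\gamma)\in q\coalg$, \cref{prop.sys} presents these as maps $\hat\beta\colon S\yon^S\to p$ and $\hat\gamma\colon T\yon^T\to q$. I form the composite
\[
(S\times T)\yon^{S\times T}\cong(S\yon^S)\otimes(T\yon^T)\xrightarrow{\;\hat\beta\otimes\hat\gamma\;}p\otimes q
\]
and apply \cref{prop.sys} in reverse to obtain a $(p\otimes q)$-coalgebra on the set $S\times T$; call its structure map $\beta\otimes\gamma$. Concretely this runs the two systems in parallel: if $\beta(s)=(i,f)$ with $f\colon p[i]\to S$ and $\gamma(t)=(j,g)$ with $g\colon q[j]\to T$, then $(\beta\otimes\gamma)(s,t)=\big((i,j),\,f\times g\big)$, where $f\times g\colon p[i]\times q[j]\to S\times T$.

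Next I would define $\mu_{p,q}$ on morphisms, sending a pair of coalgebra maps $u\colon(S,\beta)\to(S',\beta')$ and $v\colon(T,\gamma)\to(T',\gamma')$ to the function $u\times v\colon S\times T\to S'\times T'$. Here is the one place that requires genuine care rather than abstract nonsense: as \cref{warning.maps} stresses, a coalgebra morphism is an honest function on state sets and is invisible in the $S\yon^S\to p$ presentation, so I cannot simply invoke functoriality of $\otimes$. Instead I verify the square \eqref{eqn.coalg_map} for $u\times v$ by a direct chase. Unwinding the square for $u$ shows that $\beta(s)=(i,f)$ forces $\beta'(u(s))=(i,u\circ f)$, and likewise for $v$; substituting into the formula for $\beta\otimes\gamma$ shows both legs send $(s,t)$ to $\big((i,j),(u\times v)\circ(f\times g)\big)$, so the square commutes. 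Preservation of identities and composites is then immediate.

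It remains to assemble the lax monoidal data and check coherence. For the unit I take $\eta\colon\1\to\yon\coalg$ to select the one-state system $(\1,\id_\1)$, which under \cref{prop.sys} corresponds to $\id_\yon\colon\1\yon^\1\to\yon$; unit coherence then follows because $\yon\otimes(-)\cong(-)$ restricts, under the state isomorphism, to $\1\times(-)\cong(-)$. Naturality of $\mu_{p,q}$ in $(p,q)$—with respect to the action of a map $\alpha\colon p\to p'$ on coalgebras by postcomposition with $\alpha\tri S$—holds because $\mu$ was defined by postcomposing with $\hat\beta\otimes\hat\gamma$ and $\otimes$ is a bifunctor. Finally, the associativity and unit diagrams reduce, via the identification above, to the associator and unitors of $(\poly,\yon,\otimes)$ together with those of $(\smset,\1,\times)$ on state sets, since the isomorphism $(S\yon^S)\otimes(T\yon^T)\cong(S\times T)\yon^{S\times T}$ is natural and compatible with both. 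I expect the only real subtlety to be the morphism-level check flagged by \cref{warning.maps}; everything else is inherited wholesale from the symmetric monoidal structure of $\poly$.
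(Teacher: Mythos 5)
Your proof is correct, and it lands on exactly the same concrete structure map as the paper---$(s,t)\mapsto\big((\beta_1(s),\gamma_1(t)),\,(d,e)\mapsto(\beta^\sharp(d),\gamma^\sharp(e))\big)$---but it gets there by a different derivation. The paper obtains the comparison functor from the duoidal interchange $(p\tri s)\otimes(q\tri t)\to(p\otimes q)\tri(s\otimes t)$, specialized to constant polynomials $s\coloneqq S$, $t\coloneqq T$ with $S\otimes T\cong S\times T$, and then writes out the resulting formula explicitly; you instead transport the construction through \cref{prop.sys}, using the strong monoidality of $S\mapsto S\yon^S$ via the isomorphism $(S\yon^S)\otimes(T\yon^T)\cong(S\times T)\yon^{S\times T}$ and bifunctoriality of $\otimes$. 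These buy slightly different things: the duoidal route makes the lax-monoidal coherence axioms follow wholesale from general properties of duoidal categories, whereas your route reduces coherence to the associators and unitors of $(\poly,\yon,\otimes)$ and $(\smset,\1,\times)$, which is equally sound but which you must (and do) argue separately. Where you genuinely improve on the paper's exposition is the morphism level: the paper dispatches functoriality with a terse ``this is natural in $S,T$,'' which is delicate precisely because, as you note and as \cref{warning.maps} stresses, coalgebra morphisms are invisible in the $S\yon^S\to p$ presentation, so neither duoidality nor functoriality of $\otimes$ can be invoked directly there; your explicit diagram chase showing $\beta'(u(s))=(i,u\circ f)$ and hence that both legs of \eqref{eqn.coalg_map} send $(s,t)$ to $\big((i,j),(u\times v)\circ(f\times g)\big)$ supplies exactly the verification the paper leaves implicit. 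One cosmetic point: your unit $(\1,\id_\1)$ agrees with the paper's unique map $\1\to\yon\tri\1$ since $\yon\tri\1\cong\1$, so no discrepancy there.
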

\begin{proof}
See page~\pageref{proof.prop.coalg_lax}.
\end{proof}

The relevance of coalgebras to dynamics was of interest in the earliest of references we know of, namely \cite{arbib1982parametrized}, where they are referred to as codynamics. We will proceed with our own terminology.

\begin{definition}[Moore machine]
For sets $A,B$, an \emph{$(A,B)$-Moore machine} consists of
\begin{itemize}
	\item a set $S$, elements of which are called \emph{states},
	\item a function $r\colon S\to B$, called \emph{readout}, and
	\item a function $u\colon S\times A\to S$, called \emph{update}.
\end{itemize}
It is further called \emph{initialized} if it is equipped with an element $s_0\in S$.
\end{definition}

With an initialized $(A,B)$-Moore machine $(S,r,u,s_0)$, we can take any $A$-stream $a\colon\nn\to A$ and produce a $B$-stream $b\colon\nn\to B$ inductively using the formula
\[
	s_{n+1}\coloneqq u(s_n,a_n)
	\qqand
	b_n\coloneqq r(s_n).
\]

\begin{proposition}
An $(A,B)$-Moore machine with states $S$ can be identified with a map of polynomials $S\yon^S\to B\yon^A$, and hence with a $B\yon^A$-coalgebra $S\to B\yon^A\tri S$ by \cref{prop.sys}.
\end{proposition}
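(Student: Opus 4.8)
The plan is to unwind the definition of an $(A,B)$-Moore machine and match its data against the data of a polynomial map $S\yon^S\to B\yon^A$, using the decomposition of polynomial morphisms from \cref{eqn.poly_typed}. An $(A,B)$-Moore machine with states $S$ consists of a readout $r\colon S\to B$ and an update $u\colon S\times A\to S$. On the other side, by \cref{eqn.poly_typed} a map $S\yon^S\to B\yon^A$ is precisely a pair $(\varphi_1,\varphi^\sharp)$ where $\varphi_1\colon (S\yon^S)(\1)\to(B\yon^A)(\1)$ is a function on positions and $\varphi^\sharp$ supplies, for each position $s$ of $S\yon^S$, a function on directions $(B\yon^A)[\varphi_1 s]\to(S\yon^S)[s]$.

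First I would identify the positions and directions of each side. Since $S\yon^S=\sum_{s\in S}\yon^S$, its position set is $(S\yon^S)(\1)\cong S$ and the direction set at every position $s$ is $(S\yon^S)[s]=S$; likewise $B\yon^A$ has position set $B$ and direction set $A$ at each position. Under this identification the position function $\varphi_1\colon S\to B$ is exactly the readout $r$, while the direction function $\varphi^\sharp_s\colon A\to S$, allowed to depend on the position $s\in S$, assembles into a single function $S\times A\to S$, which is exactly the update $u$. Thus the correspondence $(r,u)\leftrightarrow(\varphi_1,\varphi^\sharp)$ is a bijection, giving the first claimed identification.

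For the second clause I would simply invoke \cref{prop.sys}, which already establishes the isomorphism $\poly(S\yon^S,p)\cong\poly(S,p\tri S)$ for any $p$; taking $p\coloneqq B\yon^A$ turns the polynomial map $S\yon^S\to B\yon^A$ into a $B\yon^A$-coalgebra structure $\beta\colon S\to B\yon^A\tri S$. No further work is needed here beyond specializing the earlier result.

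I do not expect a serious obstacle, since the statement is essentially a bookkeeping exercise once the position/direction data of $S\yon^S$ and $B\yon^A$ are spelled out. The only point requiring mild care is the dependence of $\varphi^\sharp_s$ on the position $s$: one must check that currying the family $\{\varphi^\sharp_s\colon A\to S\}_{s\in S}$ into $u\colon S\times A\to S$ is compatible with how \cref{eqn.poly_typed} packages the direction functions, but this is immediate because every position of $S\yon^S$ carries the same direction set $S$. It would also be worth remarking explicitly that this is the $p=B\yon^A$ case of the general picture, so that readout corresponds to the ``output'' map on positions and update to the ``input-processing'' map on directions, reinforcing the dynamical-systems reading advertised in the introduction.
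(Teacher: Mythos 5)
Your proposal is correct and matches the paper's proof, which consists precisely of the identification $\varphi_1\coloneqq r$ and $\varphi^\sharp\coloneqq u$ followed by the appeal to \cref{prop.sys}; you have merely spelled out the position/direction bookkeeping that the paper leaves implicit. No gaps.
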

\begin{proof}
The identification uses $\varphi_1\coloneqq r$ and $\varphi^\sharp\coloneqq u$.
\end{proof}

Replacing $B\yon^A$ with an arbitrary polynomial $p\in\poly$, we think of $p$-coalgebras as generalized Moore machines. We will refer to them as \emph{$p$-dynamical systems} and call $p$ the \emph{interface}. Mathematically, given $\beta\colon S\to p\tri S$, we also get the two-fold composite
\[S\To{\beta} p\tri S \To{p\tri\beta}p\tri p\tri S\]
and indeed the $n$-fold composite $S\to p\tripow{n}\tri S$ for any $n\in\nn$. The idea is that for every state $s\in S$, we get a position $r(s)\in p(\1)$, and for every direction $d\in p[r(s)]$ there, we get a new state $u(s,d)$. We thus think of $p$ as an interface for the dynamical system: $p(\1)$ says what the world can see about the current state---i.e.\ its outward position $i\coloneqq r(s)$---and $p[i]$ says what sort of forces or inputs the state can be subjected to.

A map of polynomials $\varphi\colon p\to p'$ is a change of interface. We can transform a $p$-dynamical system into a $p'$-dynamical system that has the same set of states. Indeed, simply compose any $S\to p\tri S$ with $\varphi\tri S\colon p\tri S\to p'\tri S$. 

More generally, a map $\varphi\colon p_1\otimes\cdots\otimes p_k\to p'$ allows us to take $k$-many dynamical systems $S_1\to p_1\tri S_1$ through $S_k\to p_k\tri S_k$ and use \cref{prop.coalg_lax} to combine them into a single dynamical system
\[
S\to (p_1\otimes\cdots\otimes p_k)\tri S\To{\varphi} p'\tri S
\]
with interface $p'$ and states $S\coloneqq S_1\times\cdots\times S_k$.

\begin{example}\label{ex.wiring_diagram}
Wiring diagrams are one way of combining dynamical systems as above. \begin{equation}\label{eqn.control_diag}
\varphi=\begin{tikzpicture}[oriented WD, baseline=(B)]
	\node[bb={2}{1}, fill=blue!10] (plant) {\texttt{Plant}};
	\node[bb={1}{1}, below left=-1 and 1 of plant, fill=blue!10]  (cont) {\texttt{Controller}};
	\node[circle, inner sep=1.5pt, fill=black, right=.1] at (plant_out1) (pdot) {};
	\node[bb={0}{0}, inner ysep=20pt, inner xsep=1cm, fit=(plant) (pdot) (cont)] (outer) {};
	\coordinate (outer_out1) at (outer.east|-plant_out1);
	\coordinate (outer_in1) at (outer.west|-plant_in1);
	\begin{scope}[above, font=\footnotesize]
  	\draw (outer_in1) -- node {$A$} (plant_in1);
  	\draw (cont_out1) to node (B) {$B$} (plant_in2);
  	\draw (plant_out1) to node {$C$} (outer_out1);
  	\draw
  		let 
  			\p1 = (cont.south west-| pdot),
  			\p2 = (cont.south west),
  			\n1 = \bby,
  			\n2 = \bbportlen
  		in
  			(pdot) to[out=0, in=0]
  			(\x1+\n2, \y1-\n1) --
  			(\x2-\n2, \y2-\n1) to[out=180, in=180]
  			(cont_in1);
		\end{scope}
	\node[below=0of outer.north] {\texttt{Controlled\_Plant}};
\end{tikzpicture}
\end{equation}
In the wiring diagram \eqref{eqn.control_diag} three boxes are shown: the controller, the plant, and the system; we can consider each as having a monomial interface:
\begin{equation}\label{eqn.basic_diagram}
	\const{Plant}=C\yon^{AB}
	\qquad\quad
	\const{Controller} = B\yon^C
	\qquad\quad
	\const{Controlled\_Plant} = C\yon^A.
\end{equation}
The wiring diagram itself represents a morphism
\[
\varphi\colon C\yon^{AB}\otimes B\yon^C\to C\yon^A
\]
in $\poly$. Defining $\varphi$ requires a function $\varphi_1\colon C\times B\to C$ and a function $\varphi^\sharp\colon C\times B\times A\to A\times B\times C$; the first is projection and the second is an isomorphism. Together these simply say how the wiring diagram shuttles information within the controlled plant. Indeed, the wiring diagram lets us put together dynamics of the controller and the plant to give dynamics for the controlled plant. That is, given Moore machines $S\yon^S\to \const{Plant}$ and $T\yon^T\to \const{Controller}$, we get a Moore machine $ST\yon^{ST}\to \const{Controlled\_Plant}$.

More generally, we can think of transistors in a computer as dynamical systems, and the logic gates, adder circuits, memory circuits, a connected keyboard or monitor, etc.\ each as a wiring diagram comprising these simpler systems.
\end{example}

\begin{example}
In \cref{ex.wiring_diagram} the wiring pattern is fixed, but as we show in  \cite{spivak2020poly}, $\poly$ also supports wiring diagrams for dynamical systems that can change their interaction pattern based on their internal states.
\end{example}

We now come to learners. As mentioned in the introduction, the category $\learn$ from \cite{fong2019backprop} is better understood as a bicategory which we'll denote $\llearn$. Its objects are sets, $\Ob(\llearn)=\Ob(\smset)$, a 1-morphism (learner) from $A$ to $B$ consists of a set $P$ and maps $I\colon A\times P\to B$ and $(R,U)\colon A\times B\times P\to A\times P$, and a 2-morphism---a morphism between learners---is a function $f\colon P\to P'$ making the following squares commute:
\begin{equation}\label{eqn.learn_2morph}
\begin{tikzcd}
	A\times P\ar[r, "I"]\ar[d, "A\times f"']&
	B\ar[d, equal]\\
	A\times P'\ar[r, "I'"']&
	B
\end{tikzcd}
\hspace{.5in}
\begin{tikzcd}[column sep=large]
  A\times B\times P\ar[r, "{(R,U)}"]\ar[d, "A\times B\times f"']&
  A\times P\ar[d, "A\times f"]\\
  A\times B\times P'\ar[r, "{(R',U')}"']&
  A\times P'
\end{tikzcd}
\end{equation}
We denote the category of learners from $A$ to $B$ as $\llearn(A,B)\in\smcat$.

\begin{proposition}\label{prop.learn_coalg}
For sets $A,B$, there is an equivalence of categories
\[\llearn(A,B)\cong[A\yon^A,B\yon^B]\coalg.\]
\end{proposition}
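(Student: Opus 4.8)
The plan is to produce an honest \emph{isomorphism} of categories $\llearn(A,B)\cong[A\yon^A,B\yon^B]\coalg$ by matching data on both sides term by term; this is stronger than, and immediately implies, the asserted equivalence. Write $p\coloneqq[A\yon^A,B\yon^B]$, which by \cref{ex.slens_ihom} is the monomial $p\cong B^A A^{AB}\yon^{AB}$. Thus $p$ has position-set $p(\1)\cong B^A\times A^{AB}$ and constant direction-set $p[i]\cong A\times B$ at every position $i$; keeping these two pieces of data separate is what will let the readout/request and the update parts of a learner fall out cleanly.

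First I would set up the bijection on objects. By \cref{prop.sys} a $p$-coalgebra with state set $S$ is the same as a map of polynomials $\varphi\colon S\yon^S\to p$, which by the typed description \eqref{eqn.poly_typed} is a position function $\varphi_1\colon S\to B^A\times A^{AB}$ together with, for each $s\in S$, a direction function $\varphi^\sharp_s\colon A\times B\to S$. Currying, $\varphi_1$ is exactly a pair $I\colon A\times S\to B$ and $R\colon A\times B\times S\to A$, while the family $\varphi^\sharp$ is exactly a function $U\colon A\times B\times S\to S$. Taking the parameter set to be $P\coloneqq S$, this is precisely the data $(P,I,(R,U))$ of a $1$-morphism $A\to B$ in $\llearn$, and the assignment is visibly a bijection.

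Next I would match the morphisms. A coalgebra morphism $f\colon(S,\beta)\to(S',\beta')$ is a function $f\colon S\to S'$ satisfying $\beta'\circ f=(p\tri f)\circ\beta$, where $p\tri S\cong p(S)\cong(B^A\times A^{AB})\times S^{A\times B}$ and $p\tri f\cong p(f)$ acts as the identity on the position factor and as post-composition with $f$ on the direction factor $S^{A\times B}$. Unwinding this single equation componentwise over $s\in S$ separates it into a ``position part'' and a ``direction part'': the position part gives $I'(a,f(s))=I(a,s)$ and $R'(a,b,f(s))=R(a,b,s)$, while the direction part gives $U'(a,b,f(s))=f\bigl(U(a,b,s)\bigr)$. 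These three identities are exactly the commutativity of the two squares in \eqref{eqn.learn_2morph}: the first square encodes the $I$-condition, and the two components of the second square encode the $R$-condition (position) and the $U$-condition (direction). Hence coalgebra morphisms and learner $2$-morphisms coincide under the object bijection.

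Finally, since both assignments are bijective and carry the underlying function $f$ across unchanged, identities and composites are manifestly preserved, so the correspondence is a functor that is bijective on objects and fully faithful, i.e.\ an isomorphism of categories. I expect the only real obstacle to be the bookkeeping in the third step: one must track the contravariance of the direction maps $\varphi^\sharp$ correctly and verify that $p\tri f$ genuinely acts as the identity on positions, so that the coalgebra square splits precisely into the readout/request conditions (on positions) and the update condition (on directions) rather than mixing them.
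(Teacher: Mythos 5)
Your proposal is correct and follows essentially the same route as the paper's proof: compute $[A\yon^A,B\yon^B]\cong B^A A^{AB}\yon^{AB}$ via \cref{ex.slens_ihom}, identify objects by currying the position and direction data into $(I,R,U)$, and match coalgebra morphisms with the two squares of \eqref{eqn.learn_2morph}. The only differences are that you spell out the morphism comparison that the paper dismisses as ``easily seen'' (your position/direction splitting of the coalgebra square is exactly the right bookkeeping) and that you note the correspondence is in fact an isomorphism of categories, a harmless strengthening of the stated equivalence.
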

\begin{proof}
See page~\pageref{proof.prop.learn_coalg}.
\end{proof}

We will now give a definition that generalizes the bicategory $\llearn$, give examples, and discuss intuition. In particular, we define a category-enriched operad $\oorg$, which includes $\llearn$ as a full subcategory. 

\begin{definition}[The operad $\oorg$]\label{def.org}
We define $\oorg$ to be the category-enriched operad defined as follows. The objects of $\oorg$ are polynomials: $\Ob(\oorg)\coloneqq\Ob(\poly)$. For objects $p_1,\ldots,p_k, p'$, the category of maps between them is defined by
\[
	\oorg(p_1,\ldots,p_k;p')\coloneqq[p_1\otimes\cdots\otimes p_k, p']\coalg.
\]
For any object $p$, the identity on $p$ is given by the $[p,p]$-coalgebra $\1\to [p,p](\1)\cong\poly(p,p)$ that sends $1\mapsto\id_p$. 

Given objects $p_{1,1},\ldots,p_{1,j_1},\ldots,p_{k,1},\ldots,p_{k,j_k}$, the composition functor
\begin{multline*}
	[p_{1,1}\otimes\cdots\otimes p_{1,j_1},p_1]\coalg
	\times\cdots\times
	[p_{k,1}\otimes\cdots\otimes p_{k,j_k},p_k]\coalg\\
	\times
	[p_1\otimes\cdots\otimes p_k,p']\coalg\to
	[p_{1,1}\otimes\cdots\otimes p_{k,j_k}, p']\coalg
\end{multline*}
is given by repeated application of the maps in \eqref{eqn.ihom_two_sorts} and \cref{prop.coalg_lax}.
\end{definition}

How do we think of a morphism $(S,\beta)\colon (p_1,\ldots,p_k)\to p'$ in $\oorg$? It is a dynamical system which has a set $S$ of states. For every state $s\in S$, we can read out an associated element $\beta_1(s)\colon p_1\otimes\cdots\otimes p_k\to p'$; we can think of this as a wiring diagram as in \cref{ex.wiring_diagram} or a generalization thereof. That is, the current state $s$ dictates an organization pattern $\beta_1(s)$: how outputs of the internal systems are aggregated and output from the outer interface, and how feedback from outside is distributed internally. 

But so far, this is only the readout of $\beta$. What's an input? An input to this system consists of a tuple of outputs $i\coloneqq(i_1,\ldots,i_k)\in p_1(\1)\times\cdots\times p_k(\1)$, one output for each of the internal systems, together with an input $d\in p'[\beta_1(i)]$ to the outer system. 

Imagine you're the officer in charge of an organization: you're in charge of the system by which your employees and other resources are arranged, how they send information to each other and the outside world, and how the feedback from the outside world is disbursed to the employees and resources. You see what they do, you see how the world responds, and you update your internal state and hence the system itself, however you see fit. In this image, you as the officer are playing the role of $(S,\beta)$, i.e.\ a morphism in $\oorg$. But even a simple logic gate or adder circuit in a computer---something that doesn't have a changing internal state or update how resources are connected---counts as a morphism in $\oorg$. Again, the only difference in that case is that the state set $S\cong\1$, the way the internal resources are connected---is unchanged by inputs.

\begin{example}\label{ex.0ary}
For any operad, there is an algebra of $0$-ary morphisms. In the case of $\oorg$, this algebra sends $p\mapsto p\coalg$, the category of dynamical systems on $p$, since the unit of $\otimes$ is $\yon$ and $[\yon,p]\cong p$.
\end{example}

Next we'll give a mathematical language for describing dynamical systems as in \cref{ex.0ary} as well as the generalized learners (or officers) described above.

\section{Toposes of learners}\label{chap.topos_learn}

We ended the previous section by defining the (category-enriched) operad $\oorg$ and explaining how it generalizes the bicategory $\llearn$. In this section we mainly discuss the internal language for each learner. That is, given $p,p'\in\Ob(\poly)=\Ob(\oorg)$, where perhaps $p=p_1\otimes\cdots\otimes p_k$, we discuss the category $\oorg(p;p')$ of such learners. 

Our first job is to show that every such category is a topos; this will give us access to the Mitchell-Benabou language and Kripke-Joyal semantics---the so-called \emph{internal language} of the topos and its interpretation \cite{macLane1992sheaves}. We then explain the sorts of things---propositions---that one can express in this language, e.g.\ the proposition ``I will follow the gradient descent algorithm'' is a particular case.

\subsection{The topos of $p$-coalgebras}

In this section, we show that for any polynomial $p$, there is a category $\cofree{p}$, called the \emph{cofree category on $p$}, for which we can find an equivalence
\[
p\coalg\cong\cofree{p}\set
\]
between $p$-coalgebras and functors $\cofree{p}\to\smset$. In fact, the category $\cofree{p}$ is free on a graph, making it quite easy to understand in certain respects.%
\footnote{The name ``cofree category'' comes from the fact that---up to isomorphism---comonoids in $\poly$ are categories; see \cite{ahman2016directed}. So we're really taking the cofree comonoid on $p$.}

Following \cite{nlab:tree}, we define a \emph{rooted tree} to be a graph $T$ whose free category has an initial object, called the \emph{root}; the idea is that for any node $n$, there is exactly one path from the root to $n$. We denote the nodes of $T$ by $T_0$, the root by $\text{root}_T\in T_0$, and for any node $n\in T_0$ we denote the set of arrows emanating from $n$ by $T[n]$. Note that at the target $n'$ of any arrow $a\in T[n]$, there sits another rooted tree with root $n'$; we denote this tree by $\cod_T(a)$.

\begin{definition}[The graph $\tree_p$ of $p$-trees]\label{def.ptrees}
For a polynomial $p\in\poly$, define a \emph{$p$-tree} to be a tuple $(T,\phi_1,\phi^\sharp)$, where $T$ is a rooted tree, $\phi_1\colon T_0\to p(\1)$ is a function called the \emph{position} function, and $\phi^\sharp_n$ is a bijection
\[
\phi^\sharp_n\colon p[\phi_1(n)]\To{\cong}T[n]
\]
for each node $n\in T_0$, identifying the set of branches in the tree $T$ at node $n$ with the set of directions in the polynomial $p$ at the position $\phi_1(n)$. 

We denote by $\tree_p$ the graph whose vertex set is the set of $p$-trees, and for which an arrow $a\colon T\to T'$ is a branch $a\in T[\text{root}_T]$ with $T'=\cod_T(a)$.
\end{definition}

\begin{example}
If $p=\yon^A$ for a nonempty set $A$ then there is only one $p$-tree: each node has the unique label $p(\1)\cong\1$ and $A$-many branches.

If $p=\{\text{go}\}\yon^\1+\{\text{stop}\}\yon^\0\cong\yon+\1$ then counting the number of nodes gives a bijection between set of $p$-trees and the set $\nn\cup\{\infty\}$
\[
  \fbox{$\LTO{stop}$},\quad
  \fbox{$\LTO{go}\to\LTO{stop}$},\quad
  \fbox{$\LTO{go}\to\LTO{go}\to\LTO{stop}$},\quad...\quad,
  \fbox{$\LTO{go}\to\LTO{go}\to\LTO{go}\to\cdots$}
\]
\end{example}


\begin{theorem}\label{thm.coalg_copresheaf}
For any polynomial $p$ there is an equivalence of categories
\[
p\coalg\cong\Tree_p\set
\]
where $\Tree_p$ is the free category on the graph $\tree_p$ of $p$-trees.
\end{theorem}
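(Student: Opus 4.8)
The plan is to exhibit functors in both directions between $p\coalg$ and $\Tree_p\set$ and to check that they are mutually inverse up to natural isomorphism. The decisive simplification is that $\Tree_p$ is \emph{free} on the graph $\tree_p$: by the universal property of free categories, a copresheaf $F\colon\Tree_p\to\smset$ is nothing more than a labeling of the graph $\tree_p$ in $\smset$, i.e.\ a set $F(T)$ for each $p$-tree $T$ together with a function $F(a)\colon F(T)\to F(T')$ for each branch $a\colon T\to T'$ in $\tree_p$, with \emph{no} relations to impose. Thus all that must be produced on each side is data, not coherence, and functoriality will come for free.

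From coalgebras to copresheaves, I would first \emph{unfold} each state. Given $(S,\beta)$, presented via \cref{prop.sys} as a readout $r\colon S\to p(\1)$ and an update $u$, the iterated composites $S\to p\tripow{n}\tri S$ assemble, for each state $s$, into a $p$-tree $T_s$: its root has position $r(s)$, its branches at the root are indexed by $p[r(s)]$ through $\phi^\sharp$, and the subtree along a direction $d$ is coinductively $T_{u(s,d)}$. This is exactly the behavior of $s$, equivalently the image of $s$ under the canonical behavior map into the set of $p$-trees (the carrier of the terminal $p$-coalgebra). Now define $F_S(T)\coloneqq\{s\in S\mid T_s=T\}$. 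For a branch $a\colon T\to T'$ corresponding to $d\in p[\phi_1(\mathrm{root}_T)]$, set $F_S(a)(s)\coloneqq u(s,d)$; this lands in $F_S(T')$ because the subtree of $T_s$ along $d$ is $T_{u(s,d)}$, so $T_{u(s,d)}=\cod_T(a)=T'$. Sending $(S,\beta)\mapsto F_S$ and acting on coalgebra morphisms in the evident way gives a functor $\Phi\colon p\coalg\to\Tree_p\set$.

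For the inverse, given $F\colon\Tree_p\to\smset$ I would set $S\coloneqq\coprod_T F(T)$, define the readout of $(T,x)$ to be $\phi_1(\mathrm{root}_T)\in p(\1)$, and define the update at a direction $d$ to follow the branch $a$ that $d$ names: $(T,x)\mapsto(\cod_T(a),\,F(a)(x))$. By \cref{prop.sys} this is a $p$-coalgebra $\Psi(F)$, and the assignment is clearly functorial. The two round trips are then routine: starting from $(S,\beta)$, partitioning $S$ by behavior tree gives a natural bijection $\coprod_T F_S(T)\cong S$ compatible with the coalgebra structure, so $\Psi\Phi\cong\id$; starting from $F$, a short coinductive argument shows the behavior tree of $(T,x)\in\Psi(F)$ is $T$ itself, whence $\Phi\Psi(F)(T')=\{(T,x)\mid T=T'\}=F(T')$ naturally, so $\Phi\Psi\cong\id$.

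The main obstacle---really the only nontrivial point---is making the unfolding $s\mapsto T_s$ precise and well-defined, including for infinite $p$-trees, and proving the compatibility $T_{u(s,d)}=\cod_{T_s}(a)$ on which the functoriality of $\Phi$ rests. This amounts to the standard identification of the set of $p$-trees with the carrier of the terminal $p$-coalgebra together with its universal property. Once that is in place, every remaining verification is bookkeeping, precisely because the freeness of $\Tree_p$ eliminates all cocycle conditions on copresheaves.
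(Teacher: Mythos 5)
Your proof is correct, and while it rests on the same central construction as the paper's---unfolding each state $s$ into its behavior tree $T_s$---it is routed through genuinely different machinery. The paper first invokes the classical equivalence between copresheaves on a free category and discrete opfibrations over its generating graph, reducing the theorem to an equivalence between $p\coalg$ and graph homomorphisms $\pi\colon H\to\tree_p$ with the local-bijection (``opfib'') property; it then builds the total graph $H$ with vertex set $S$ and defines $\pi(s)$ by an informal node-by-node induction (``and so on''). You instead work fiberwise and never mention opfibrations: your copresheaf $F_S(T)=\{s\mid T_s=T\}$ is precisely the fiber decomposition of the paper's $\pi$, and your $\Psi(F)$ with carrier $\coprod_T F(T)$ is precisely the paper's total graph, so the two proofs are related by the Grothendieck construction. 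Where you differ in substance is in grounding the unfolding $s\mapsto T_s$ in the identification of the vertex set of $\tree_p$ with the carrier of the terminal $p$-coalgebra; this is a real gain in rigor on the one nontrivial point, since coinduction/terminality gives well-definedness for infinite trees, the compatibility $T_{u(s,d)}=\cod_{T_s}(d)$, and---by uniqueness of the map to the terminal coalgebra---both the fact that coalgebra morphisms preserve behavior trees (which you should state explicitly, as it is what makes $\Phi$ act on morphisms) and the round-trip identity $T_{(T,x)}=T$ in $\Psi(F)$. What the paper's route buys in exchange is that, once the opfibration dictionary is in place, the treatment of morphisms and both round trips become immediate bookkeeping over $\tree_p$, with no appeal to terminal coalgebras; your route keeps the argument self-contained at the price of importing the standard terminal-coalgebra characterization of $p$-trees, which you correctly identify as the only step requiring genuine proof.
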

\begin{proof}
See page~\pageref{proof.thm.coalg_copresheaf}.
\end{proof}

\begin{figure}
\[
\begin{tikzpicture}
\node[draw] (p1) {
\begin{tikzcd}[column sep=small, ampersand replacement=\&]
	\bul[dgreen]\ar[rr, bend left]\ar[loop left]\&\&
	\bul[dyellow]\ar[dl, bend left]\ar[ll, bend left]\\\&
	\bul[red]
\end{tikzcd}
};
\node[draw, right=of p1] {
\begin{tikzpicture}[trees, scale=1.2,
  level 1/.style={sibling distance=20mm},
  level 2/.style={sibling distance=10mm},
  level 3/.style={sibling distance=5mm},
  level 4/.style={sibling distance=2.5mm},
  level 5/.style={sibling distance=1.25mm}]
  \node[dgreen] (a) {$\bullet$}
    child {node[dgreen] {$\bullet$}
    	child {node[dgreen] {$\bullet$}
    		child {node[dgreen] {$\bullet$}
  				child {node[dgreen] {$\bullet$}
    				child {}
    				child {}
    			}
  				child {node[dyellow] {$\bullet$}
    				child {}
    				child {}
    			}
  			}
    		child {node[dyellow] {$\bullet$}
					child {node[dgreen] {$\bullet$}
      			child {}
      			child {}
     			}
    			child  {node[red] {$\bullet$}}
  			}
    	}
    	child {node[dyellow] {$\bullet$}
    		child {node[dgreen] {$\bullet$}
  				child {node[dgreen] {$\bullet$}
    				child {}
    				child {}
    			}
  				child {node[dyellow] {$\bullet$}
    				child {}
    				child {}
    			}
  			}
    		child  {node[red] {$\bullet$}}
    	}
    }
    child {node[dyellow] {$\bullet$}
    	child {node[dgreen] {$\bullet$}
    		child {node[dgreen] {$\bullet$}
  				child {node[dgreen] {$\bullet$}
    				child {}
    				child {}
    			}
  				child {node[dyellow] {$\bullet$}
    				child {}
    				child {}
    			}
  			}
    		child {node[dyellow] {$\bullet$}
					child {node[dgreen] {$\bullet$}
      			child {}
      			child {}
     			}
    			child  {node[red] {$\bullet$}}
  			}
  		}
  		child {node[red] {$\bullet$}
  		}
  	}
  ;
\end{tikzpicture}
};
\end{tikzpicture}
\]
\caption{Left: a dynamical system, i.e.\ coalgebra, for the polynomial $p\coloneqq\{\bul[dgreen],\bul[dyellow]\}\yon^\2+\bul[red]\cong\2\yon^\2+\1$. Right: the $p$-tree corresponding to the node $\bul[dgreen]$.
}
\end{figure}

\subsection{The internal language of $p\coalg$}

For any category $\cat{C}$, the category $\cat{C}\set$ of functors $\cat{C}\to\smset$ forms a topos. In particular, this means that mathematicians have already developed a language and logic that faithfully represents the structures of $\cat{C}\set$, and we can import it wholesale; see \cite[Chapter 7]{fong2019seven} or \cite[Chapter VI]{macLane1992sheaves}. Now that we know from \cref{thm.coalg_copresheaf} that $p\coalg$ is a topos for any $p\in\poly$, we are interested in corresponding language for the topos $\llearn(A,B)=[A\yon^A,B\yon^B]\coalg$ of learners, for any sets $A,B$; hence the title ``learners' languages.'' However since most of the relevant abstractions work more generally for $p\coalg$,  we'll mainly work there.

Not assuming the reader knows topos theory, we will proceed as though we are defining the few relevant concepts from scratch, when in actuality we are merely ``reading them off'' from the established literature. For example \cref{def.logical_prop} simply unpacks the topos-theoretic definition of a logical proposition as a subobject of the terminal object in the topos $p\coalg$.

\begin{definition}\label{def.logical_prop}
A \emph{logical proposition (about $p$-coalgebras)} is defined to be a set $P\ss\tree_p$ of $p$-trees satisfying the condition that if $T\in P$ is a tree in $P$, then for any direction $d\in T[\text{root}_T]$, the tree $\cod_T(d)\in P$ is also a tree in $P$.
\end{definition}

\cref{prop.easy_subobjects} gives us an easy way to construct logical propositions about $p$-coalgebras, and hence learners. Namely, it says if we put a condition on the $p$-positions that can show up as labels, and if we put a condition on the codomain map (how directions in the tree lead to new positions), we get a logical proposition. Of course, these aren't the only ones, but they form a nice special case.

Recall from \cref{def.ptrees} that a $p$-tree is a rooted tree $T$ equipped with a position function $\phi_1\colon T_0\to p(\1)$; we elide the bijections (earlier denoted $\phi_n^\sharp\colon T[n]\cong p[\phi_1(n)]$) in what follows.

\begin{proposition}\label{prop.easy_subobjects}
Given $p\in\poly$, suppose given subsets
\[
	Q\ss p(\1)
	\qqand
	R\ss\prod_{i\in Q}\prod_{d\in p[i]}Q.
\]
Then the following set of trees is a logical proposition:
\[
	P_Q^R\coloneqq
	\{T\in\tree_p\mid
	\forall(i:T_0). \phi_1(i)\in Q\wedge
	\forall(d:T[i]).\cod_T(d)\in (R i d)
	\}.
\]
\end{proposition}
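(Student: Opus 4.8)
The plan is to verify directly that $P_Q^R$ satisfies the hereditary closure condition of \cref{def.logical_prop}: namely, that whenever $T\in P_Q^R$ and $d\in T[\text{root}_T]$ is a branch at the root, the subtree $\cod_T(d)$ again lies in $P_Q^R$. The whole point is that the two conjuncts defining membership in $P_Q^R$ are \emph{local at each node}: the position condition $\phi_1(i)\in Q$ and the transition condition $\cod_T(d)\in(R\,i\,d)$ are each quantified over all nodes $i\in T_0$ and all branches at those nodes, so they should transfer to any sub-$p$-tree essentially for free.

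First I would record the structural fact, implicit in \cref{def.ptrees}, that for any branch $d\in T[\text{root}_T]$ the subtree $T'\coloneqq\cod_T(d)$ is itself a $p$-tree: it is rooted at the target of $d$, its node set $T'_0$ is the set of descendants of that target and hence a subset of $T_0$, and it inherits the position function and the direction-bijections from $T$ by restriction. Concretely, for every node $n\in T'_0$ one has $\phi_1^{T'}(n)=\phi_1^{T}(n)$, an equality of branch sets $T'[n]=T[n]$, and $\cod_{T'}(e)=\cod_{T}(e)$ for each $e\in T'[n]$. Granting this, both clauses follow by restriction. Since $T'_0\ss T_0$ and $T\in P_Q^R$ already forces $\phi_1(n)\in Q$ for \emph{all} $n\in T_0$, the position condition holds a fortiori on $T'_0$; and for any $n\in T'_0$ with branch $e\in T'[n]=T[n]$, the value $\cod_{T'}(e)=\cod_T(e)$ satisfies the $R$-condition because $T$ does at the node $n\in T_0$. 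Hence $T'$ meets both defining clauses, which is exactly the required closure.

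I do not expect any real obstacle: the content is merely that $P_Q^R$ is cut out by a condition that is \emph{hereditary}, stated uniformly at each node in terms of node-local data (its position and the positions reachable in one step), while the operation $T\mapsto\cod_T(d)$ only discards nodes and preserves that local data. The single point requiring care is the inheritance claim of the second paragraph—that $\cod_T(d)$ is a genuine $p$-tree whose position function, branch sets, and codomain operation agree with those of $T$ on the retained nodes—after which the verification of the two clauses is automatic.
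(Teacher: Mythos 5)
Your proof is correct and is essentially the paper's own argument: the paper simply declares the result ``immediate from \cref{def.logical_prop}'', and your verification---that $\cod_T(d)$ is a $p$-tree whose nodes, position labels, branch sets, and codomain operation are inherited by restriction from $T$, so that the two universally quantified node-local clauses transfer automatically---is exactly the routine check the paper leaves implicit. Nothing is missing; you have merely written out what ``immediate'' means here.
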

\begin{proof}
The result is immediate from \cref{def.logical_prop}.
\end{proof}

\begin{example}[Gradient descent]
The gradient descent, backpropagation algorithm used by each ``neuron'' in a deep learning architecture can be phrased as a logical proposition about learners. The whole learning architecture is then put together as in \cite{fong2019backprop}, or as we've explained things above, using the operad $\oorg$ from \cref{def.org}.\goodbreak

So suppose a neuron is tasked with learning a function $\rr^m\to\rr^n$, and it has a parameter space $\rr^k$, i.e.\ we are given a smooth function $f\colon\rr^k\times\rr^m\to\rr^n$. We will define a corresponding logical proposition using \cref{prop.easy_subobjects}. Define $p\in\poly$ by
\[
	p\coloneqq
	[\rr^m\yon^{\rr^m},\rr^n\yon^{\rr^n}]\cong
	\sum_{g\colon\rr^m\yon^{\rr^m}\to\rr^n\yon^{\rr^n}}\yon^{\rr^m\times\rr^n}.
\]
Define $Q\ss\{(g_1,g^\sharp)\mid g_1\colon\rr^m\to\rr^n, g^\sharp\colon\rr^m\times\rr^n\to\rr^m\}$ by saying that $g_1(x)$ must be of the form $f(a,x)$ for some $a\in\rr^k$ in the parameter set and that $g^\sharp_x$ is given by ``pulling back gradient vectors'' using the map on cotangent spaces defined by composing with the derivative of $g_1$ at $x$, in the usual way.

Now given $(g_1,g^\sharp)\in Q$, we continue with the setup of \cref{prop.easy_subobjects} by defining $R(g_1,g^\sharp)\colon \rr^m\times\rr^n\to Q$ to say how the learner updates its current parameter value $a\in\rr^k$ given an input-output pair; this again is specified by the deep learning algorithm. Typically, it uses a loss function to calculate a cotangent vector at $f(a,x)$ which is passed back to a cotangent vector at $a$, and a dual vector of some ``learning rate'' $\epsilon$ is traversed.

The details are important for implementation, but not for understanding the idea. The idea is that as long as we say what sorts of maps are allowed (smooth maps with reverse derivatives) and how they update, we have defined a logical proposition.
\end{example}

The logical propositions that come from \cref{prop.easy_subobjects} are very special. More generally, one could have a logical proposition like ``whenever I receive two red tokens within three seconds, I will wait five seconds and then send either three blue tokens or two blues and six reds.'' As long as this behavior has the ``whenever'' flavor---more precisely as long as it satisfies the condition in \cref{def.logical_prop}---it will be a logical proposition in the topos.

\subsection{Future work}\label{sec.future_work}

There are many avenues for future work. One is to give more syntactic language---beyond the logical symbols $\texttt{true},\texttt{false},\wedge,\vee,\Rightarrow,\neg,\forall,\exists$ that exist in any topos---for building logical propositions in the $p\coalg$ toposes specifically. Another is to understand various modalities in these toposes.

The sort of morphisms between toposes that seem to arise most naturally in this context are not the usual kind---adjoint functors $\cat{E}\leftrightarrows\cat{E}'$ for which the left adjoint preserves all finite limits, called \emph{geometric morphisms}---but instead adjoint functors $\cat{E}\leftrightarrows\cat{E}'$ for which the left adjoint preserves all \emph{connected limits}. Thus another avenue for future research is to consider how logical and type-theoretic statements move between toposes that are connected in this way.

\appendix
\section{Proofs}

\begin{proof}[Proof of \cref{prop.composite}]\label{proof.prop.composite}
It is well-known that composition of functors is a monoidal operation, so it suffices to see that the polynomial \eqref{eqn.tri_formula} is the composite of functors $p,q$. To show this, we use the fact that for any set $A$ we have a bijection $\yon^A\cong\prod_{a\in A}\yon$ to calculate the composite
\begin{align*}
	p\tri q&\cong
	\sum_{i\in p(\1)}\prod_{d\in p[i]}\yon\;\tri\;\sum_{j\in q(\1)}\prod_{e\in q[j]}\yon\\&\cong
	\sum_{i\in p(\1)}\prod_{d\in p[i]}\sum_{j\in q(\1)}\prod_{e\in q[j]}\yon\\&\cong
	\sum_{i\in p(\1)}\sum_{j\colon p[i]\to q(\1)}\prod_{d\in p[i]}\prod_{e\in q[j(d)]}\yon
	\cong\sum_{i\in p(\1)}\;\sum_{j\colon p[i]\to q(\1)}\yon^{\sum_{d\in p[i]}q[jd]}
\end{align*}
where the first isomorphism is \eqref{eqn.poly}, the second is substitution, the third is the distributive law, and the fourth is properties of exponents.
\end{proof}

\begin{proof}[Proof of \cref{prop.otimes}]\label{proof.prop.otimes}
With the formula given, it is clear that the $\otimes$-operation is associative (up to isomorphism), and that $\yon$, which has $\yon(\1)\cong\1$ and $\yon[1]\cong\yon$, is a unit. One can also check that the formula is functorial in $p$ and $q$, completing the proof.
\end{proof}

\begin{proof}[Proof of \cref{prop.internal_hom}]\label{proof.prop.internal_hom}
The natural isomorphism is given by rearranging terms:
\begin{align*}
	\poly(r\otimes p,q)&\cong
	\prod_{k\in r(\1)}\prod_{i\in p(\1)}\sum_{j\in q(\1)}\prod_{e\in q[j]}r[k]\times p[i]\\&\cong
	\prod_{k\in r(\1)}\sum_{\varphi_1\colon p(\1)\to q(\1)}\prod_{i\in p(\1)}\prod_{e\in q[\varphi_1(i)]}r[k]\times p[i]\\&\cong
	\prod_{k\in r(\1)}\sum_{\varphi_1\colon p(\1)\to q(\1)}\left(\prod_{i\in p(\1)}\prod_{e\in q[\varphi_1(i)]}p[i]\right)\times\left(\prod_{i\in p(\1)}\prod_{e\in q[\varphi_1(i)]}r[k]\right)\\&\cong
	\prod_{k\in r(\1)}\sum_{\varphi\colon p\to q}\prod_{i\in p(\1)}\prod_{e\in q[\varphi_1(i)]}r[k]\\&\cong
	\poly\left(r,\sum_{\varphi\colon p\to q}\yon^{\sum_{i\in p(\1)}q[\varphi_1(i)]}\right)\cong
	\poly(r,[p,q]).
\end{align*}
In order, these isomorphisms are given by: unfolding the definition of morphisms in $\poly$, distributivity, products commuting with products, definition of morphisms in $\poly$, rules of exponents, and \cref{eqn.internal_hom}'s definition of $[p,q]$, respectively.
\end{proof}

\begin{proof}[Proof of \cref{prop.coalg_lax}]\label{proof.prop.coalg_lax}
We need to give not only the functor $\lambda\colon p\coalg\times q\coalg\to (p\otimes q)\coalg$, for any $p,q\in\poly$ but also a functor $\{1\}\to\yon\coalg$, which we can identify with a $\yon$-coalgebra; we take the latter to be the unique function $\1\to\yon\tri\1$. For the former, one could proceed abstractly using the fact that there is a duoidal structure on $\poly$
\[
	(p\tri s)\otimes(q\tri t)\to (p\otimes q)\tri(s\otimes t).
\]
Indeed, since for sets $S,T$ we have $S\otimes T\cong S\times T$, the result will follow from the properties of duoidal structures (applied in the case where $s\coloneqq S$ and $t\coloneqq T$ are constant polynomials). However, for the reader's convenience, we will give the map $\lambda\colon p\coalg\times q\coalg\to (p\otimes q)\coalg$ more explicitly. 

Given $\beta\colon S\to p\tri S$ and $\gamma\colon T\to q\tri T$, we define a function
\begin{align*}
  ST&\to(p\otimes q)\tri (ST)\cong\sum_{i\in p(\1)}\sum_{j\in q(\1)}(ST)^{p[i]\times q[j]}\\
  (s,t)&\mapsto
\big(i\coloneqq\beta_1(s),j\coloneqq\gamma_1(t), (d,e)\mapsto (\beta^\sharp_i(d),\gamma^\sharp_j(e)\big).
\end{align*}
This is natural in $S,T$, which makes $\lambda$ a functor for any $p,q$. One can check that all the axioms of a lax monoidal functor are verified, in the sense that the required diagrams commute up to natural isomorphism.
\end{proof}

\begin{proof}[Proof of \cref{prop.learn_coalg}]\label{proof.prop.learn_coalg}
On one hand, an object in $\llearn(A,B)$ as described in \eqref{eqn.learn} consists of a set $P$ and functions $A\times P\to B$ and $A\times B\times P\to A$ and $A\times B\times P\to P$. On the other hand, we have $[A\yon^A,B\yon^B]\cong B^A A^{AB}\yon^{AB}$ by \cref{ex.slens_ihom}, so a coalgebra $P\to[A\yon^A,B\yon^B]\tri P$ consists of a function $P\to B^A$, a function $P\to A^{AB}$, and a function $P\to P^{AB}$. The two descriptions can be identified by currying. The $[A\yon^A,B\yon^B]\coalg$ morphisms
\[
\begin{tikzcd}
	P\ar[r]\ar[d]&
	B^AA^{AB}P^{AB}\ar[d]\\
	P'\ar[r]&
	B^AA^{AB}(P')^{AB}
\end{tikzcd}
\]
are easily seen to coincide with those shown in \eqref{eqn.learn_2morph}.
\end{proof}

\begin{proof}[Proof of \cref{thm.coalg_copresheaf}]\label{proof.thm.coalg_copresheaf}
It is well-known that $\cat{C}\set$ is equivalent to the category of discrete opfibrations over $\cat{C}$ via the category-of-elements construction. When $\cat{C}$ is free on a graph $G$, the category of elements for any functor $h\colon\cat{C}\to\smset$ is also free on a graph, say $H$. In this case the opfibration can be identified with a graph homomorphism $\pi\colon H\to G$ with the property (``opfib'') that for any vertex $h\in H$, the function on arrows $H[h]\To{\cong} G[\pi(h)]$ induced by $\pi$ is a bijection. Under this correspondence, a morphism $h\to h'$ of copresheaves is identified with a graph homomorphism $f\colon H\to H'$ for which $\pi=\pi'\circ f$.

Thus we have reduced to showing that there is an equivalence between $p\coalg$ and the category of those graph homomorphisms $\pi\colon H\to\tree_p$ that have the opfib property. Suppose given a $p$-coalgebra $\beta\colon S\to p\tri S$; it includes a function $\beta_1\colon S\to p(\1)$ and for each $s\in S$ a function $\beta_s^\sharp\colon p[\beta_1(s)]\to S$. We define the corresponding graph $G_{S,\beta}$ to have vertex set $S$, and each $s\in S$ to have $p[\beta_1(s)]$-many outgoing arrows; the target of each outgoing arrow $d\in p[\beta_1(s)]$ is defined to be $\beta_s^\sharp(d)$. The graph homomorphism $\pi\colon G_{S,\beta}\to\tree_p$ is defined inductively: for any $s\in S$, the $p$-tree $\pi(s)$ has root labeled $\beta_1(s)$, and for each outgoing branch $d\in p[\beta_1(s)]$ the target vertex is assigned the label $\beta_1(s')$, where $s'\coloneqq\beta_s^\sharp(d)$, and for each outgoing branch $d'\in p[\beta_1(s')]$ the target vertex is assigned the label $\beta_1(s'')$ where $s''\coloneqq \beta_{s'}^\sharp(d')$, and so on. It is clear that $\pi$ satisfies the opfib property, since it assigns to each vertex $s$ in the graph $G_{S,\beta}$ a vertex in $\tree_p$ (the $p$-tree $\pi(s)$) with the same set $p[\beta_1(s)]$ of outgoing arrows.

Conversely, given a graph homomorphism $\pi\colon G\to\tree_p$ with the opfib property, let $S_G$ be the set of vertices in $G$. The required coalgebra map $\beta\colon S_G\to p\tri S_G$ consists of a function $\beta_1\colon S_G\to p(\1)$ and a function $\beta^\sharp_s\colon p[\beta_1(s)]\to S_G$ for every $s\in S_G$. We take the function $\beta_1$ to send vertex $s$ to the root label $\phi(\text{root}_{\pi(s)})$ for tree $\pi(s)$. Since we have a bijection $p[\beta_1(s)]\cong G[s]$, we can take $\beta^\sharp_s$ to simply be the target function $G[s]\to S_G$ for the graph $G$. 

It is a straightforward calculation to check that these two constructions are mutually inverse, and to check that graph homomorphisms over $\tree_p$ correspond bijectively to morphisms of $p$-coalgebras.
\end{proof}

\printbibliography
\end{document}